\theoremstyle{plain}
\newtheorem{theorem}{Theorem}[section]
\theoremstyle{definition}
\newtheorem{remark}[theorem]{Remark}
\newtheorem{question}[theorem]{Question}
\def\Rn{\mathbb R\sp n}
\def\R{\mathbb R}
\def\m{_{\max}}
\newtoks\by
\newtoks\paper
\newtoks\book
\newtoks\jour
\newtoks\yr
\newtoks\pages
\newtoks\vol
\newtoks\publ
\newtoks\eds
\newtoks\proc
\def\ota{{\hbox{???}}}
\def\cLear{\by=\ota\paper=\ota\book=\ota\jour=\ota\yr=\ota
\pages=\ota\vol=\ota\publ=\ota}
\def\endpaper{\the\by, \textit{\the\paper},
{\the\jour} \textbf{\the\vol} (\the\yr), \the\pages.\cLear}
\def\endbook{\the\by, \textit{\the\book}, \the\publ.\cLear}
\def\endprep{\the\by, \textit{\the\paper}, \the\jour.\cLear}
\def\endproc{\the\by, \textit{\the\paper}, \the\publ, \the\pages.\cLear}
\def\name#1#2{#1 #2}
\def\et{ and }
\numberwithin{equation}{section}
\begin{document}

\title[On the necessity of bump conditions]
{On the necessity of bump conditions for the two-weighted maximal inequality}

\author{Lenka Slav\'ikov\'a}

\address{Department of Mathematical Analysis\\
Faculty of Mathematics and Physics\\
Charles University\\
Sokolovsk\'a~83\\
186~75 Praha~8\\
Czech Republic} \email{slavikova@karlin.mff.cuni.cz}

\subjclass[2010]{42B25, 42B35} \keywords{Bump condition, two-weighted inequality, Hardy-Littlewood maximal operator}

\thanks{This research was partly supported by the grant P201-13-14743S of the Grant Agency of the Czech Republic.}

\begin{abstract}
We study the necessity of bump conditions for the boundedness of the Hardy-Littlewood maximal operator $M$ from $L^p(v)$ into $L^p(w)$, where $1<p<\infty$. The conditions in question are obtained by replacing the average of $\sigma=v^{-\frac{1}{p-1}}$ in the Muckenhoupt $A_p$-condition by an average with respect to certain Banach function space, and are known to be sufficient for the two-weighted maximal inequality. We show that these conditions are in general not necessary for the boundedness of $M$ from $L^p(v)$ into $L^p(w)$.
\end{abstract}

\maketitle

\section{Introduction and statement of the result}\label{S:intro}

The Hardy--Littlewood maximal operator $M$ is defined for every measurable function $f$ on $\Rn$ by
$$
Mf(x)=\sup_{x\in Q} \frac{1}{|Q|} \int_Q |f|, \quad x\in \Rn,  
$$
where the supremum is taken over all cubes $Q$ containing $x$. By a ``cube'' we always mean a compact cube with sides parallel to coordinate axes.

Assume that $1<p<\infty$. A longstanding open problem in harmonic analysis is to characterize those couples $(w,v)$ of nonnegative measurable functions, called weights in the sequel, which satisfy the inequality
\begin{equation}\label{E:boundedness_M}
\int_{\Rn} w (Mf)^p\leq C \int_{\Rn} vf^p
\end{equation} 
for all nonnegative measurable functions $f$ and some positive constant $C$. In what follows, let us exclude the trivial case when $w=0$ a.e.\ and employ the usual notation $\sigma=v^{-\frac{1}{p-1}}$.

In the special case when the two weights $(w,v)$ coincide, inequality~\eqref{E:boundedness_M} was characterized by Muckenhoupt~\cite{M}. He showed that~\eqref{E:boundedness_M} holds with $v=w$ if and only if $w$ satisfies the $A_p$-condition
\begin{equation}\label{E:ap}
\sup_{Q} \left(\frac{1}{|Q|} \int_Q w\right)\left(\frac{1}{|Q|}\int_Q \sigma\right)^{p-1} <\infty.
\end{equation}
We note that throughout this paper, the notation $\sup_{Q}$ means that the supremum is taken over all cubes $Q$ in $\Rn$. 

The situation is much more complicated in the two-weighted case, since the $A_p$-condition~\eqref{E:ap} is still necessary for~\eqref{E:boundedness_M},
but it is not sufficient any more (see, e.g.,~\cite[Chapter 4, Example 1.15]{GR}). A solution to the two-weighted problem was given by Sawyer~\cite{Sa}, who showed that~\eqref{E:boundedness_M} holds if and only if there is a positive constant $C$ such that
\begin{equation}\label{E:sawyer}
\int_Q w (M(\chi_Q\sigma))^p \leq C\int_Q \sigma <\infty
\end{equation}
for every cube $Q$. Condition~\eqref{E:sawyer} still involves the operator $M$ itself, and it remains an open problem to find a characterization of~\eqref{E:boundedness_M} which is closer in form to~\eqref{E:ap}, and which thus might be easier to use in applications. 
An important result in this direction was obtained by Neugebauer~\cite{N} and improved subsequently by P\'erez~\cite{P} and P\'erez and Rela~\cite{PR}. In these papers, the authors found strengthenings of the $A_p$-condition~\eqref{E:ap} that are already sufficient for~\eqref{E:boundedness_M}. The necessity of these ``bump'' conditions is however not discussed. We show in this paper that none of the sufficient conditions from~\cite{N, P, PR} is necessary for~\eqref{E:boundedness_M}.

Before stating our result we need to describe in some detail the results from~\cite{N, P, PR}. Neugebauer~\cite{N} showed that if
\begin{equation}\label{E:neugebauer}
\sup_{Q} \left(\frac{1}{|Q|} \int_Q w^{r}\right)^{\frac{1}{r}} \left(\frac{1}{|Q|}\int_Q \sigma^{r}\right)^{\frac{p-1}{r}} <\infty
\end{equation}
for some $r>1$ then~\eqref{E:boundedness_M} is fulfilled. P\'erez~\cite{P} found a way how to weaken the sufficient condition~\eqref{E:neugebauer}. He noticed that in order to obtain~\eqref{E:boundedness_M} one just needs to replace in a suitable way the integral average of $\sigma$ in~\eqref{E:ap}. While in~\cite{N}, this integral average was replaced by the average with respect to certain Lebesgue space, in~\cite{P} it was shown that more general spaces of measurable functions, the so called Banach function spaces, can be used in this connection as well. 

We note that the reader can find a precise definition of the notion ``Banach function space", together with many of its properties and several specific examples, in~\cite{BS}; here we just recall that a Banach function space $X$ is a Banach space containing measurable functions on $\Rn$ whose norm $\|\cdot\|_X$ is induced by a so called function norm $\rho$. This notion means that $\rho$ is a nonnegative functional defined on the set of all nonnegative measurable functions on $\Rn$ which fulfills the properties of a norm, is monotone ($0\leq f\leq g$ a.e.\ implies $\rho(f) \leq \rho(g)$) and satisfies the Fatou property ($0\leq f_n \nearrow f$ a.e.\ implies $\rho(f_n) \nearrow \rho(f)$). Two nontriviality assumptions are also required ($\rho$ is finite on characteristic functions of sets of finite measure, and is bounded from below by a constant multiple of the $L^1$-norm on those sets). The space $X$ then consists exactly of those functions $f$ for which $\rho(|f|)< \infty$, and $\|f\|_X=\rho(|f|)$ in this situation. In fact, it will be convenient for us to admit a slight abuse of notation and write $\|f\|_{X}:=\rho(|f|)$ also in the case when $\rho(|f|)$ is infinite.

To each Banach function space $X$ there corresponds its associate space $X'$, which is another Banach function space induced by the function norm $\rho'$ given by
$$
\rho'(f)=\sup_{\rho(g)\leq 1} \int_{R^n} fg. 
$$

The $X$-average of a measurable function $f$ over a cube $Q$ is defined in~\cite{P} by
$$
\|f\|_{X,Q}=\|\tau_{\ell(Q)} f\chi_Q\|_{X},
$$
where $\tau_\delta$ denotes, for $\delta>0$, the dilation operator $\tau_\delta f(x)=f(\delta x)$, and $\ell(Q)$ stands for the sidelength of the cube $Q$. The maximal operator $M_X$ is then given by
$$
M_X f(x)=\sup_{x\in Q} \|f\|_{X,Q}, \quad x\in \Rn.
$$
Notice that if $X=L^1$ then $M_X$ coincides with the classical Hardy-Littlewood maximal operator $M$. 

The sufficient condition for~\eqref{E:boundedness_M} proved in~\cite{P} has the form
\begin{equation}\label{E:ap_new}
\sup_{Q} \left(\frac{1}{|Q|} \int_Q w\right) \|\sigma^{\frac{1}{p'}}\|^p_{X,Q} <\infty,
\end{equation}
where $p'=\frac{p}{p-1}$ and $X$ is any Banach function space whose associate space $X'$ fulfills
\begin{equation}\label{E:mx}
\int_{\Rn} (M_{X'}f)^p \leq C \int_{\Rn} f^p
\end{equation}
for all nonnegative measurable functions $f$ and some positive constant $C$. A basic example of a Banach function space for which this result can be applied is the Lebesgue space $L^q$ with $q>p'$. The strength of the result lies, however, in more delicate Banach function spaces, such as, e.g., the Orlicz space $L^A$ corresponding to the Young function $A(t)=t^{p'} \log^{\gamma}(1+t)$ with $\gamma>p'-1$ (for the definition of a Young function and of an Orlicz space, see, e.g.,~\cite{BS}).

Condition~\eqref{E:mx} can be weakened if we allow its dependence on $\sigma$. Namely, the following implication holds: if $X$ is a Banach function space such that~\eqref{E:ap_new} is fulfilled and there is a positive constant $C$ for which
\begin{equation}\label{E:rela}
\int_Q (M_{X'}(\sigma^{\frac{1}{p}}\chi_Q))^p \leq C \int_Q \sigma <\infty
\end{equation}
for every cube $Q$, then~\eqref{E:boundedness_M} holds. This was proved by P\'erez and Rela~\cite{PR} as a consequence of the Sawyer characterization of the two-weighted maximal inequality. We note that the result in~\cite{PR} is restricted only to Orlicz spaces, however, it is easy to observe that the proof given here works equally well for an arbitrary Banach function space over $\Rn$. Moreover, the paper~\cite{PR} gives even a quantitative version of this result which is shown to hold, at least for Orlicz spaces, not only in the Euclidean setting, but also in the more general context of spaces of homogeneous type.

Notice that the new condition~\eqref{E:rela} is in many situations much weaker than the original one~\eqref{E:mx}. For instance, one can easily observe that~\eqref{E:mx} is not valid when $X'=L^p$, while~\eqref{E:rela} holds with $X'=L^p$ if and only if
\begin{equation}\label{E:a_infty}
\int_Q (M_{L^p}(\sigma^{\frac{1}{p}}\chi_Q))^p
=\int_Q M(\sigma \chi_Q) \leq C\int_Q \sigma <\infty
\end{equation}
for all cubes $Q$. It was shown by Fujii~\cite{F} and rediscovered later by Wilson~\cite{W} that the validity of condition~\eqref{E:a_infty} is equivalent to the fact that $\sigma$ is an $A_\infty$-weight, that is, a locally integrable weight which satisfies the one-weighted $A_p$-condition for some $p>1$.

As observed before, it is a relevant question to ask whether the previously mentioned sufficient condition (which is the weakest one of those appearing in~\cite{N, P, PR}) is also necessary for~\eqref{E:boundedness_M}.

\begin{question}\label{Q:question}
Given a couple $(w,v)$ of weights satisfying~\eqref{E:boundedness_M}, is it true that there is a Banach function space $X$ fulfilling~\eqref{E:ap_new} and~\eqref{E:rela}?
\end{question}

We notice that the answer to this question is positive whenever $\sigma$ is an $A_\infty$-weight. Indeed, in this situation it suffices to take $X=L^{p'}$. We already know that~\eqref{E:rela} is then fulfilled. Further, condition~\eqref{E:ap_new} is in this case just the standard $A_p$-condition, which is well known to be necessary for~\eqref{E:boundedness_M}. In fact, according to the reverse H\"older inequality (see, e.g.,~\cite[Chapter 4, Lemma 2.5]{GR}), condition~\eqref{E:boundedness_M} implies even~\eqref{E:ap_new} with $X=L^{p'+\varepsilon}$ for some $\varepsilon>0$, depending on $\sigma$. Since the space $X=L^{p'+\varepsilon}$ satisfies not only~\eqref{E:rela}, but also the stronger condition~\eqref{E:mx}, one can obtain even a better conclusion in this case.

The interesting problem is whether a similar result holds without the $A_\infty$-assumption. We show that this is not the case in general.

Given $x\in \Rn$, we shall denote by $|x|_{\max}$ the maximum norm of $x$, that is, if $x=(x_1,\dots,x_n)$ then $|x|_{\max}=\max_{i=1,\dots,n} |x_i|$. 

\begin{theorem}\label{T:main}
Let $1<p<\infty$, and let
\begin{align*}
&w(x)=\frac{|x|_{\max}^{n(p-1)}}{(1+\log_+ |x|_{\max})^p}, \\
&\sigma(x)=\frac{1}{|x|_{\max}^n (1+\log_+ \frac{1}{|x|_{\max}})^{p'}} \quad \textup{for a.e. } x\in \Rn.
\end{align*}
Then the couple $(w,v)$, with $v=\sigma^{1-p}$, fulfills~\eqref{E:boundedness_M}, but there is no Banach function space $X$ for which~\eqref{E:ap_new} and~\eqref{E:rela} hold simultaneously.
\end{theorem}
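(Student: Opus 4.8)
The plan is to treat the two assertions separately, using Sawyer's theorem for the boundedness and a duality argument on dyadic scales for the non-existence of $X$. Throughout I would exploit that $w$ and $\sigma$ depend only on $t=|x|_{\max}$, so that on a centered cube $Q_\rho=[-\rho,\rho]^n$ every average collapses to a one-dimensional integral via $\int_{Q_\rho}f(|x|_{\max})\,dx=2^n n\int_0^\rho f(t)t^{n-1}\,dt$; a preliminary computation shows that $\int_{Q_\rho}\sigma\approx\log\rho$ and $\frac1{|Q_\rho|}\int_{Q_\rho}w\approx \rho^{n(p-1)}(1+\log\rho)^{-p}$ for large $\rho$, so that the $A_p$ constant of the pair stays bounded.

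For \eqref{E:boundedness_M} I would verify Sawyer's testing condition \eqref{E:sawyer}, that is $\int_Q w\,(M(\chi_Q\sigma))^p\le C\int_Q\sigma$ for every cube $Q$. The delicate feature is that $\sigma\notin A_\infty$: on a large centered cube $Q_{2^N}$ one has $\int_{Q_{2^N}}\sigma\approx N$ while $M(\chi_{Q_{2^N}}\sigma)(x)\approx |x|_{\max}^{-n}\log|x|_{\max}$, so \eqref{E:a_infty} fails by a factor $\approx N$. However, the logarithmic decay of $w$ at infinity is tuned to absorb exactly this defect: on the annulus $\{|x|_{\max}\approx 2^j\}$ the factor $(1+\log 2^j)^{-p}\approx j^{-p}$ in $w$ cancels the $j^{p}$ produced by $(M(\chi_Q\sigma))^p$, so each annulus contributes $O(1)$ and the testing integral totals $\approx N\approx\int_{Q_{2^N}}\sigma$. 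After splitting cubes according to their position and size relative to the origin and to the unit scale and estimating $M(\chi_Q\sigma)$ pointwise, the same cancellation yields \eqref{E:sawyer} in every case; this part is computational but routine.

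For the non-existence, suppose a Banach function space $X$ satisfies both \eqref{E:ap_new} and \eqref{E:rela}. On the reference cube $Q_1$ I introduce the dyadic annuli $R_i=\{2^{-i-1}\le|x|_{\max}<2^{-i}\}$ and the biorthogonal layers $e_i=2^{in/p'}\chi_{R_i}$, $\tilde e_i=2^{in/p}\chi_{R_i}$, normalized so that $\|e_i\|_{L^{p'},Q_1}\approx\|\tilde e_i\|_{L^{p},Q_1}\approx 1$ and $\frac1{|Q_1|}\int_{Q_1}e_i\tilde e_i\approx 1$. Since $\sigma$ is a pure power at infinity but carries the critical logarithm $(1+\log\frac1{|x|_{\max}})^{p'}$ at the origin, transplanting to $Q_1$ gives $\|\sigma^{1/p}\|_{X',Q_{2^j}}\gtrsim 2^{-jn/p}\bigl\|\sum_{i=0}^{j}\tilde e_i\bigr\|_{X',Q_1}$ and $\|\sigma^{1/p'}\|_{X,Q_{2^{-j}}}\approx 2^{jn/p'}\bigl\|\sum_{i=0}^{\infty}\tfrac1{j+i}\,e_i\bigr\|_{X,Q_1}$, the constant coefficients reflecting the absence of a logarithm at infinity and the harmonic coefficients $\tfrac1{j+i}$ reflecting the exponent $p'$ of the logarithm at the origin. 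Now I extract two incompatible consequences. Bounding $M_{X'}(\sigma^{1/p}\chi_{Q_{2^N}})(x)$ from below, at each $x$ with $|x|_{\max}\approx 2^j$, by the centered-cube average $\|\sigma^{1/p}\|_{X',Q_{2^{j+1}}}$ and summing over annuli, \eqref{E:rela} on $Q_{2^N}$ becomes $\sum_{j=1}^{N}\bigl\|\sum_{i=0}^{j}\tilde e_i\bigr\|_{X',Q_1}^{p}\lesssim\int_{Q_{2^N}}\sigma\approx N$; as the partial sums increase in $j$, averaging plus the Fatou property force $\bigl\|\sum_{i=0}^{\infty}\tilde e_i\bigr\|_{X',Q_1}\le C$. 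On the other hand, \eqref{E:ap_new} on the small cubes $Q_{2^{-j}}$ together with $w(x)\approx|x|_{\max}^{n(p-1)}$ near the origin gives $\bigl\|\sum_{i=0}^{\infty}\tfrac1{j+i}e_i\bigr\|_{X,Q_1}\le C$ for every $j$. Pairing these through the generalized Hölder inequality for $X,X'$ and the biorthogonality of the layers yields $\sum_{i=0}^{\infty}\tfrac1{j+i}\lesssim\bigl\|\sum_i\tfrac{e_i}{j+i}\bigr\|_{X,Q_1}\bigl\|\sum_i\tilde e_i\bigr\|_{X',Q_1}\le C^2$, which is absurd since the harmonic tail diverges.

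The main obstacle, and the reason the statement holds only at these precise exponents, is that the single-cube content of \eqref{E:ap_new} and \eqref{E:rela}, combined by ordinary duality, merely reproduces the $A_p$ condition, which is satisfied here; no contradiction can arise from one scale. One must therefore use the maximal operator in \eqref{E:rela} across all scales to turn it into a genuine norm bound for the stacked layers in $X'$, and one must avoid any rearrangement-invariance hypothesis on $X$. The biorthogonal layers achieve both at once: \eqref{E:rela} controls $\bigl\|\sum\tilde e_i\bigr\|_{X'}$ while \eqref{E:ap_new} controls the harmonically weighted stack $\bigl\|\sum \tfrac{e_i}{j+i}\bigr\|_{X}$, and these are linked only by the trivial pairing. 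The borderline — a just-divergent harmonic series — is exactly the signature of the critical logarithmic powers $p$ and $p'$ appearing in $w$ and $\sigma$.
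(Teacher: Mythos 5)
Your proposal is correct and shares the paper's overall architecture --- Sawyer's testing condition for the positive part, and for the negative part the extraction of two incompatible norm bounds (one in $X'$ from \eqref{E:rela}, one in $X$ from \eqref{E:ap_new}) that are then paired against the divergent integral $\int_{Q(0,1)}|x|_{\max}^{-n}(1+\log\frac{1}{|x|_{\max}})^{-1}\,dx$ --- but the mechanism you use to get the $X'$-bound is genuinely different. The paper applies \eqref{E:rela} to \emph{small} centered cubes $Q(0,a)$ and, via the pointwise lower bound $M_{X'}(\sigma^{1/p}\chi_{Q(0,a)})(x)\geq\|\sigma^{1/p}\|_{X',Q(0,2|x|_{\max})}$ together with the near-origin structure of $\sigma$, lets $a\to 0$ to reach \eqref{E:norm} directly; you instead apply \eqref{E:rela} to \emph{large} cubes $Q(0,2^N)$, where $\sigma$ is a pure power, and recover the same bound $\|\,|y|_{\max}^{-n/p}\chi_{Q(0,1)}\|_{X'}\leq C$ by a Ces\`aro argument over dyadic scales (monotonicity of the partial-sum norms against the budget $\int_{Q(0,2^N)}\sigma\approx N$) followed by the Fatou property. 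Both routes are valid; yours illustrates that the obstruction can be read off from the behavior of $\sigma$ at infinity rather than at the origin, while the paper's is shorter. Your final pairing via the generalized H\"older inequality even avoids the Lorentz--Luxemburg identity $X=(X')'$ that the paper invokes. Two points you should tighten: (i) for \eqref{E:sawyer} you only fully treat large centered cubes and defer general cubes as ``routine''; the clean way to close this is the paper's observation that $(M\sigma)^p w\approx\sigma$ pointwise a.e.\ (your annulus cancellation is exactly this identity in disguise), whence $\int_Q w\,(M(\chi_Q\sigma))^p\leq\int_Q w\,(M\sigma)^p\approx\int_Q\sigma$ for \emph{every} cube with no case analysis; (ii) only one fixed $j$ (say $j=1$) of your family of bounds $\bigl\|\sum_i\frac{e_i}{j+i}\bigr\|_{X}\leq C$ is needed, since the harmonic tail diverges for each fixed $j$.
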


\begin{remark}
Assume that $\alpha \in (0,n)$ and $\beta \in \R$, and set $v=\sigma^{1-p}$, where
$$
\sigma(x)=\frac{1}{|x|_{\max}^\alpha (1+\log_+ \frac{1}{|x|_{\max}})^{\beta}} \quad \textup{for a.e. } x\in \Rn.
$$
Then the answer to Question~\ref{Q:question} is positive, regardless of what $w$ is. This follows from the fact that $\sigma$ is an $A_\infty$ weight, combined with our previous observations.
\end{remark}

\section{Proof of Theorem~\ref{T:main}}\label{S:proofs}

We devote this section to the proof of Theorem~\ref{T:main}. Throughout the proof, we shall denote by $Q(x,r)$ the cube centered in $x$ and with sidelength $r$. 

\begin{proof}[Proof of Theorem~\ref{T:main}]
We first observe that 
$$
M\sigma(x) \approx \frac{1+\left|\log \frac{1}{|x|_{\max}}\right|}{|x|\m^n (1+\log_+ \frac{1}{|x|\m})^{p'}}, \quad x\in \Rn \setminus \{0\},
$$
and therefore,
$$
(M\sigma)^p(x) w(x) \approx \sigma(x) \quad \textup{for a.e. } x\in \Rn.
$$
In both cases, ``$\approx$" denotes the equivalence up to multiplicative constants depending on $p$ and $n$. Hence, for any cube $Q$,
$$
\int_Q (M(\chi_Q \sigma))^p(x) w(x)\,dx 
\leq \int_Q (M\sigma)^p(x) w(x)\,dx
\approx \int_Q \sigma (x)\,dx,
$$
and Sawyer's result yields that the couple $(w,v)$ satisfies~\eqref{E:boundedness_M}. 

Let $X$ be any Banach function space. Given $b\in (0,2)$, we have
\begin{align*}
\left\|\sigma^{\frac{1}{p}}\right\|_{X', Q(0,b)}
&=\left\|\left(\sigma^{\frac{1}{p}} \chi_{Q(0,b)}\right)(b y)\right\|_{X'}\\
&=\frac{1}{b^{\frac{n}{p}}}\left\|\frac{\chi_{Q(0,1)}(y)}{|y|\m^{\frac{n}{p}}\left(1+\log_+ \frac{1}{b|y|\m}\right)^{\frac{p'}{p}}} \right\|_{X'}\\
&\geq \frac{1}{b^{\frac{n}{p}}} \left\|\frac{\chi_{Q(0,1)\setminus Q(0,\frac{b}{2})}(y)}{|y|\m^{\frac{n}{p}}\left(1+\log_+ \left(\frac{2}{b}\right)^2\right)^{\frac{p'}{p}}} \right\|_{X'}\\
&= \frac{1}{b^{\frac{n}{p}}\left(1+2\log_+ \frac{2}{b}\right)^{\frac{p'}{p}}} \left\|\frac{\chi_{Q(0,1)\setminus Q(0,\frac{b}{2})}(y)}{|y|\m^{\frac{n}{p}}} \right\|_{X'}\\
&\geq \frac{1}{2^{\frac{p'}{p}}b^{\frac{n}{p}} (1+\log_+ \frac{2}{b})^{\frac{p'}{p}}} \left\|\frac{\chi_{Q(0,1)\setminus Q(0,\frac{b}{2})}(y)}{|y|\m^{\frac{n}{p}}}\right\|_{X'}.
\end{align*}
Thus, for any $a\in (0,2)$,
\begin{align}\label{E:est}
\int_{Q(0,a)} \left(M_{X'}(\sigma^{\frac{1}{p}} \chi_{Q(0,a)})\right)^p(x)\,dx
&\geq \int_{Q(0,a)} \left\|\sigma^{\frac{1}{p}}\right\|_{X', Q(0,2|x|\m)}^p\,dx\\ \nonumber
&\geq \int_{Q(0,a)} \frac{1}{2^{p'+n}|x|\m^{n} (1+\log_+ \frac{1}{|x|\m})^{p'}} \left\|\frac{\chi_{Q(0,1)\setminus Q(0,|x|\m)}(y)}{|y|\m^{\frac{n}{p}}}\right\|_{X'}^p\,dx\\ \nonumber
&\geq \frac{1}{2^{p'+n}} \left\|\frac{\chi_{Q(0,1)\setminus Q(0,\frac{a}{2})}(y)}{|y|\m^{\frac{n}{p}}}\right\|_{X'}^p  \int_{Q(0,a)}\frac{\,dx}{|x|\m^{n} (1+\log_+ \frac{1}{|x|\m})^{p'}}\\
&=\frac{1}{2^{p'+n}} \left\|\frac{\chi_{Q(0,1)\setminus Q(0,\frac{a}{2})}(y)}{|y|\m^{\frac{n}{p}}}\right\|_{X'}^p \int_{Q(0,a)} \sigma(x)\,dx. \nonumber
\end{align}

Assume that $X$ fulfills~\eqref{E:rela}. Then there is a constant $C>0$, independent of $a\in (0,2)$, such that
\begin{equation}\label{E:assumption}
\int_{Q(0,a)} \left(M_{X'}(\sigma^{\frac{1}{p}} \chi_{Q(0,a)})\right)^p(x)\,dx \leq C \int_{Q(0,a)} \sigma(x)\,dx.
\end{equation}
Since $\int_{Q(0,a)} \sigma(x)\,dx$ is positive and finite, a combination of~\eqref{E:est} and~\eqref{E:assumption} yields that
$$
\left\|\frac{\chi_{Q(0,1)\setminus Q(0,\frac{a}{2})}(y)}{|y|\m^{\frac{n}{p}}}\right\|_{X'} \leq 2^{\frac{p'+n}{p}} C^{\frac{1}{p}} =:D.
$$
Passing to limit when $a$ tends to $0$ and using the Fatou property of $\|\cdot\|_X$, we obtain
\begin{equation}\label{E:norm}
\left\|\frac{\chi_{Q(0,1)}(y)}{|y|\m^{\frac{n}{p}}}\right\|_{X'} \leq D.
\end{equation}

To get a contradiction, assume that condition~\eqref{E:ap_new} is satisfied as well. Since 
\begin{align*}
\int_{Q(0,1)} w(x)\,dx \|\sigma^{\frac{1}{p'}}\|_{X,Q(0,1)}^p
\leq \sup_{Q} \left(\frac{1}{|Q|} \int_Q w(x)\,dx\right) \|\sigma^{\frac{1}{p'}}\|^p_{X,Q}
<\infty
\end{align*}
and $\int_{Q(0,1)} w(x)\,dx$ is clearly positive, the function $\sigma^{\frac{1}{p'}} \chi_{Q(0,1)}$ belongs to $X$. However, by~\eqref{E:norm} and by the identity $X=(X')'$ (see, e.g.,~\cite[Chapter 1, Theorem 2.7]{BS}), we have
\begin{align*}
\|\sigma^{\frac{1}{p'}}\|_{X,Q(0,1)} 
&=\sup_{\|f\|_{X'}\leq 1} \int_{Q(0,1)} \sigma^{\frac{1}{p'}}(x) |f(x)|\,dx\\
&\geq \frac{1}{D} \int_{Q(0,1)} \frac{\sigma^{\frac{1}{p'}}(x)}{|x|\m^{\frac{n}{p}}}\,dx\\
&=\frac{1}{D} \int_{Q(0,1)} \frac{\,dx}{|x|\m^n (1+\log_+ \frac{1}{|x|\m})} =\infty,
\end{align*}
a contradiction.
Thus, conditions~\eqref{E:ap_new} and~\eqref{E:rela} cannot be fulfilled simultaneously. The proof is complete.
\end{proof}

\section*{Acknowledgements}

I would like to thank Carlos P\'erez for fruitful discussions on two-weighted inequalities and, in particular, for suggesting to me the problem of the necessity of bump conditions. I am also grateful to Lubo\v s Pick for careful reading of this paper.

\end{document}